\newtheorem{theorem}{Theorem}[section]
\newtheorem{proposition}[theorem]{Proposition}
\newtheorem{lemma}[theorem]{Lemma}
\newtheorem{definition}[theorem]{Definition}
\newcommand{\N}{\mathbb{N}}
\newcommand{\eps}{\varepsilon}
\newcommand{\vp}{\varphi}
\title{Quantitative inconsistent feasibility for averaged mappings}
\author{Andrei Sipo\c s${}^{a,b}$\\[2mm]
\footnotesize ${}^a$Research Center for Logic, Optimization and Security (LOS), Department of Computer Science,\\
\footnotesize Faculty of Mathematics and Computer Science, University of Bucharest,\\
\footnotesize Academiei 14, 010014 Bucharest, Romania\\[1mm]
\footnotesize ${}^b$Simion Stoilow Institute of Mathematics of the Romanian Academy,\\
\footnotesize Calea Grivi\c tei 21, 010702 Bucharest, Romania \\[2mm]
\footnotesize E-mail: andrei.sipos@fmi.unibuc.ro\\
}
\date{}
\begin{document}

\maketitle

\begin{abstract}
Bauschke and Moursi have recently obtained results that implicitly contain the fact that the composition of finitely many averaged mappings on a Hilbert space that have approximate fixed points also has approximate fixed points and thus is asymptotically regular. Using techniques of proof mining, we analyze their arguments to obtain effective uniform rates of asymptotic regularity.

\noindent {\em Mathematics Subject Classification 2010}: 47H05, 47H09, 47J25, 03F10.

\noindent {\em Keywords:} Proof mining, averaged mappings, nonexpansive mappings, resolvents, rates of asymptotic regularity.
\end{abstract}

\section{Introduction}\label{intro}

A fundamental issue in nonlinear analysis and optimization is {\it asymptotic regularity}: the property that, given a metric space $X$, a mapping $T$ on $X$ and a sequence $(x_n)$ in $X$ associated in some way to $T$,
$$\lim_{n \to \infty} d(x_n,Tx_n) = 0,$$
that is, $(x_n)$ is an approximate fixed point sequence for $T$. If, for any $x \in X$, the property holds for the Picard iteration of $T$ starting with $x$, i.e. $(T^nx)_n$, then we simply say that $T$ is {\it asymptotically regular}, as this is how the notion was originally introduced in \cite{BroPet66}. For a general iterative sequence, showing asymptotic regularity is often the first step in proving its (weak or strong) convergence.

Let now $X$ be a Hilbert space, $n \geq 1$ and $C_1,\ldots,C_n$ be closed, convex, nonempty subsets of $X$ with
$$\bigcap_{i=1}^n C_i \neq \emptyset.$$
This configuration is known as a {\it (consistent) convex feasibility problem}. Then -- denoting, for any closed, convex, nonempty subset $C$ of $X$, the metric projection onto it by $P_C$ -- the Picard iteration of the composition of projections $T:= P_{C_n} \circ\ldots\circ P_{C_1}$ starting from any point $x \in X$ is weakly convergent to a point in the intersection of the sets, a classical result of Bregman \cite{Bre65}. For the more general problem of {\it inconsistent feasibility} -- where we do not assume that the intersection is nonempty -- it was hypothesized in \cite{BauBorLew97} that the asymptotic regularity of $T$ still holds. This was later proven by Bauschke \cite{Bau03}, by first showing that the mapping has arbitrarily small displacements (hence the name `zero displacement conjecture' for the hypothesis) using a number of {\it ad hoc} constructions on a cartesian power of the Hilbert space, and then invoking the fact that this is equivalent to asymptotic regularity, using that $T$ is strongly nonexpansive, a class of mappings introduced in \cite{BruRei77} that is closed under composition and contains the projection operators. The result was later generalized from projections to firmly nonexpansive mappings, first assumed in \cite{BauMarMofWan12} to have approximate fixed points and then dropping that requirement in \cite{BauMou18}, where one can only hope to get an upper bound on the minimal displacement vector of the composition mapping.

Recently, a massive generalization of the latter result from firmly nonexpansive to averaged mappings was obtained by Bauschke and Moursi in \cite{BauMouXX}. This larger class (for more information, see \cite{Com04}) still sits inside the class of strongly nonexpansive mappings and is closed under composition. In addition, the proof techniques are much more natural, making direct use of the properties such as cocoercivity and rectangularity of the monotone operators associated to the averaged mappings under discussion. In particular, for the case where each mapping has a minimal displacement vector equal to zero, this gives a new way of showing asymptotic regularity for the composition.

The question may also be approached in a quantitative way, i.e. one can ask for a {\it rate of asymptotic regularity} for $(x_n)$ with respect to $T$, which is a function $\Sigma: (0,\infty) \to \N$ such that for all $\eps > 0$ and all $n \geq \Sigma(\eps)$, $d(x_n,Tx_n) \leq\eps$. This ties into the area of proof mining \cite{Koh08}, an applied subfield of mathematical logic that concerns itself with finding additional (for example, quantitative) information in concrete mathematical proofs by analyzing them using tools from proof theory. As it may be seen e.g. in the recent survey of Kohlenbach \cite{Koh19}, proof mining has been highly successful in the last two decades at extracting rates of asymptotic regularity for widely used iterations of nonlinear analysis. A few years ago, Kohlenbach has analyzed the results in \cite{Bau03,BauMarMofWan12} presented above, extracted bounds for the approximate fixed points and by combining them with his previous results in \cite{Koh16} on strongly nonexpansive mappings, obtained rates of asymptotic regularity \cite{Koh19b}. Although the analyzed proofs are highly non-trivial, appealing to deep results such as Minty's theorem, the resulting rate is of surprisingly low complexity.

In this paper, we update the techniques in \cite{Koh16,Koh19b} in order to analyze \cite{BauMouXX} and give a rate of asymptotic regularity for the composition of averaged mappings. The essential ingredients of the proof in \cite{BauMouXX} are the facts that an averaged mapping is the reflected resolvent of a cocoercive operator (as shown by \cite[Proposition 2.2]{BauMouXX}, based on work in \cite{Gis17,MouVan19}) and that cocoercive operators are rectangular. For the latter we give a quantitative version in Proposition~\ref{prop-theta}. This we use then to get upper bounds on approximate fixed points of the composition of two averaged mappings in Theorem~\ref{thm-phi} and then, by induction, of finitely many averaged mappings in Theorem~\ref{thm-psi}. By computing the modulus of strong nonexpansiveness of averaged mappings in Proposition~\ref{av-sne} and applying the result previously obtained in \cite{Koh19b}, Theorem~\ref{vp}, which quantitatively links the zero displacement of a strongly nonexpansive mapping with its asymptotic regularity, we obtain the desired rate in Theorem~\ref{thm-main}.

\section{Main results}\label{sec:main}

We start with some preliminaries. Let $X$ be a Hilbert space. A mapping $T: X \to X$ is called {\it nonexpansive} if for all $x$, $y \in X$, $\|Tx-Ty\| \leq \|x-y\|$. If $\alpha \in (0,1)$, a mapping $R:X \to X$ is called {\it $\alpha$-averaged} if there is a nonexpansive mapping $T:X \to X$ such that for all $x \in X$, $Rx = (1-\alpha)x+\alpha Tx$. Every averaged mapping is clearly nonexpansive; in particular, a $(1/2)$-averaged operator is called {\it firmly nonexpansive}, so there is a bijective correspondence between firmly nonexpansive and plainly nonexpansive operators given by $U \mapsto 2U - id_X$. By \cite[Proposition 4.4]{BauCom17}, an operator $U$ is firmly nonexpansive if and only if for all $x$, $y \in X$, $\|Ux-Uy\|^2 \leq \langle x-y,Ux-Uy\rangle$.

For any $\alpha$, $\beta \in(0,1)$, we define $\alpha \star \beta$ to be equal to
$$\frac{\alpha+\beta-2\alpha\beta}{1-\alpha\beta} = \frac1{1+\frac1{\frac\alpha{1-\alpha} + \frac\beta{1-\beta}}}.$$
Using the expression in the right-hand side, we may immediately derive that this operation is associative and commutative and that for any $m \geq 2$ and any $\alpha_1,\ldots,\alpha_m \in (0,1)$,
$$\alpha_1 \star\cdots \star\alpha_m = \frac1{1+\frac1{\sum_{i=1}^m \frac{\alpha_i}{1-\alpha_i}}}.$$
By \cite[Proposition 4.46]{BauCom17}, for any $m \geq 2$, $\alpha_1,\ldots,\alpha_m \in (0,1)$ and $R_1,\ldots,R_m:X \to X$ such that for each $i$, $R_i$ is $\alpha_i$-averaged, one has that $R_m \circ\ldots\circ R_1$ is $(\alpha_1 \star\cdots \star\alpha_m)$-averaged.

A set-valued operator $A \subseteq X \times X$ is called {\it monotone} if for any $(a,b)$, $(c,d) \in A$, $\langle a-c,b-d \rangle \geq 0$; it is {\it maximally monotone} (or {\it maximal monotone}) if it is maximal among monotone operators as ordered by inclusion. It is obvious that if $A$ is (maximally) monotone, then $A^{-1}$ is also (maximally) monotone. If $A$ is maximally monotone, then $(id_X + A)^{-1}$ is a firmly nonexpansive single-valued mapping on $X$ which is denoted by $J_A$ and called the {\it resolvent} of $A$. This association is bijective, by \cite[Propositions 23.8 and 23.10]{BauCom17}, and if we compose it with the previous bijection, we obtain the {\it reflected resolvent} of $A$, $R_A:=2J_A - id_X$.

Let $\beta > 0$. A set-valued operator $A \subseteq X \times X$ is called {\it $\beta$-cocoercive} if for any $(a,b)$, $(c,d) \in A$, $\langle a-c,b-d \rangle \geq \beta\|b-d\|^2$. A $\beta$-cocoercive operator $A$ is necessarily a single-valued mapping on the whole of $X$, since on the one hand single-valuedness is trivially implied by the definition, whereas on the other hand the condition is equivalent to the fact that $A^{-1}$ is strongly monotone with constant $\beta$, and thus -- by \cite[Proposition 22.11]{BauCom17} -- surjective, yielding that $A$ has full domain. Therefore, such a single-valued mapping $A:X \to X$ is $\beta$-cocoercive if and only if for all $x$, $y \in X$, $\langle x-y,Ax-Ay \rangle \geq \beta\|Ax-Ay\|^2$, i.e. if and only if $\beta A$ is firmly nonexpansive, which implies that $A$ is $(1/\beta)$-Lipschitz. By \cite[Proposition 2.2]{BauMouXX}, if $A \subseteq X \times X$ is maximally monotone, then $A$ is $\beta$-cocoercive if and only if $R_A$ is $(1+\beta)^{-1}$-averaged.

A set-valued operator $A \subseteq X \times X$ is called {\it rectangular} -- or {\it $3^*$-monotone} -- if for any $c$ in the domain of $A$ and any $b'$ in the range of $A$, $\sup_{(a,a')\in A} \langle a-c,b'-a'\rangle <\infty$. We have -- see \cite[Exemple 2]{BreHar76} and \cite[Examples 25.15 and 25.20]{BauCom17} for proofs -- that cocoercive operators are rectangular, and the following proposition expresses this fact quantitatively.

\begin{proposition}\label{prop-theta}
Put, for all $\beta$, $L_1$, $L_2$, $L_3 > 0$,
$$\Theta(\beta,L_1,L_2,L_3):=(L_1+L_2) \left(L_3 + \frac{L_1+L_2+2\beta L_3 + \sqrt{L_1^2 + L_2^2 + 2L_1L_2 + 8\beta L_1L_3 + 4\beta L_2L_3}}{2\beta}\right).$$
Let $X$ be a Hilbert space. Let $\beta$, $L_1$, $L_2$, $L_3 > 0$ and $A: X \to X$ be $\beta$-cocoercive. Let $b$, $c \in X$ with $\|b\|\leq L_1$, $\|c\|\leq L_2$ and $\|Ab\| \leq L_3$. Then for all $a \in X$,
$$\langle a-c,Ab-Aa \rangle \leq \Theta(\beta,L_1,L_2,L_3).$$
\end{proposition}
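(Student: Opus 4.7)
The plan is to reduce the claim to a one-variable optimization. Cocoercivity provides the lower bound $\langle a - b, Aa - Ab\rangle \geq \beta\|Aa - Ab\|^2$, so the natural first move is to split $a - c = (a - b) + (b - c)$ in order to isolate the piece that is directly controlled by cocoercivity. Concretely, I would write
$$\langle a - c, Ab - Aa\rangle = \langle a - b, Ab - Aa\rangle + \langle b - c, Ab - Aa\rangle,$$
apply cocoercivity to the first summand to get an upper bound of $-\beta\|Ab - Aa\|^2$, and apply the Cauchy--Schwarz inequality together with the triangle inequality $\|b - c\| \leq L_1 + L_2$ to the second, yielding an upper bound of $(L_1 + L_2) \cdot \|Ab - Aa\|$. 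Setting $t := \|Ab - Aa\|$, the bound becomes $-\beta t^2 + (L_1 + L_2) t$, a concave quadratic in $t \geq 0$ whose maximum $(L_1 + L_2)^2/(4\beta)$ is attained at $t = (L_1 + L_2)/(2\beta)$; one checks that this value is $\leq \Theta(\beta, L_1, L_2, L_3)$, so the statement holds.

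The specific shape of $\Theta$ -- in particular the square root arising from the quadratic formula -- suggests the author traces through a slightly different, less optimized estimate. My guess is that instead of using Cauchy--Schwarz to bound $\langle b - c, Ab - Aa\rangle \leq (L_1+L_2)\|Ab-Aa\|$, they apply the triangle inequality $\|Ab - Aa\| \leq L_3 + \|Aa\|$ to arrive at $\leq (L_1 + L_2)(L_3 + \|Aa\|)$; then separately cocoercivity plus the reverse triangle inequality $\|Ab - Aa\| \geq \|Aa\| - L_3$ yields a quadratic inequality of the shape $\beta(u - L_3)^2 \leq \text{(linear in } u\text{)}$, where $u := \|Aa\|$. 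Solving this quadratic for $u$ via the quadratic formula produces a closed-form upper bound for $\|Aa\|$ whose expression is exactly the term $L_3 + \tfrac{L_1 + L_2 + 2\beta L_3 + \sqrt{\cdots}}{2\beta}$ inside the parentheses in the definition of $\Theta$, and substituting back into $(L_1+L_2)(L_3 + \|Aa\|)$ produces $\Theta$.

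The main obstacle, if one insists on matching $\Theta$ in its precise form, is the bookkeeping in the last step: the discriminant $(L_1 + L_2)^2 + 4\beta L_3(2L_1 + L_2)$ is asymmetric in $L_1 \leftrightarrow L_2$, which indicates that the splitting of $\langle b - c, Ab - Aa\rangle$ into pieces coming from $b$ and from $c$ is not done uniformly, but with different triangle-type bounds applied to each. Once the correct variant is chosen, the rest is elementary algebra, and the resulting closed form should appear with no further surprises.
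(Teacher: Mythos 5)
Your first argument is correct and complete, and it takes a genuinely different (and simpler) route than the paper. Writing $\langle a-c,Ab-Aa\rangle=\langle a-b,Ab-Aa\rangle+\langle b-c,Ab-Aa\rangle$, bounding the first summand by $-\beta\|Ab-Aa\|^2$ via cocoercivity and the second by $(L_1+L_2)\|Ab-Aa\|$ via Cauchy--Schwarz, and maximizing the concave quadratic in $t=\|Ab-Aa\|$ gives the uniform bound $(L_1+L_2)^2/(4\beta)$, which does not even involve $L_3$; the comparison you leave as ``one checks'' is indeed immediate, since the square root in $\Theta$ is at least $L_1+L_2$, hence $\rho:=\frac{L_1+L_2+2\beta L_3+\sqrt{(L_1+L_2)^2+8\beta L_1L_3+4\beta L_2L_3}}{2\beta}\geq\frac{L_1+L_2}{\beta}$ and $\Theta\geq(L_1+L_2)\rho\geq(L_1+L_2)^2/\beta\geq(L_1+L_2)^2/(4\beta)$. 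The paper argues differently, by cases on $\|Aa\|$: if $\|Aa\|\leq\rho$ it uses cocoercivity only to discard $\langle b-a,Ab-Aa\rangle\geq 0$ and then estimates $\langle b-c,Ab-Aa\rangle\leq(L_1+L_2)(L_3+\rho)$, while if $\|Aa\|\geq\rho$ it uses that $\rho$ dominates the largest root of a quadratic in $\|Aa\|$, combined with cocoercivity, to get the bound $\|c\|\,\|Ab\|\leq L_2L_3$; so your guess in the last two paragraphs (a quadratic inequality solved for $\|Aa\|$ and substituted into $(L_1+L_2)(L_3+\|Aa\|)$) correctly locates where the square root comes from, but not the actual case split. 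What the paper's route buys is fidelity to the qualitative Br\'ezis--Haraux-style argument being mined, which is how $\Theta$ acquired its shape and the form fed into Theorem~\ref{thm-phi} and its successors; what your route buys is a shorter proof with a sharper, $L_3$-free constant, which could legitimately replace $\Theta$ (the later proofs only use it as an upper bound) and would slightly improve the downstream majorants.
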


\begin{proof}
Let $a \in X$. Put
$$\rho:=\frac{L_1+L_2+2\beta L_3 + \sqrt{L_1^2 + L_2^2 + 2L_1L_2 + 8\beta L_1L_3 + 4\beta L_2L_3}}{2\beta},$$
so $\Theta(\beta,L_1,L_2,L_3)=(L_1+L_2)(L_3 +\rho)$. If $\|Aa\| \leq \rho$, then, using the fact that $A$ is $\beta$-cocoercive, so $\langle b-a, Ab-Aa \rangle \geq \beta\|Ab-Aa\|^2 \geq 0$, we have that
$$\langle a-c,Ab-Aa \rangle \leq \langle b-c, Ab-Aa \rangle \leq (\|b\|+\|c\|)(\|Ab\|+\|Aa\|) \leq (L_1 + L_2)(L_3 + \rho).$$
In the case where $\|Aa\| \geq \rho$, using the definition of $\rho$ we get that
$$\|Aa\| \geq \frac{\|b\| + \|c\| + 2\beta\|Ab\| + \sqrt{\|b\|^2 + \|c\|^2 + 2\|b\|\|c\| + 8\beta\|b\|\|Ab\| + 4\beta\|c\|\|Ab\|}}{2\beta},$$
where the right hand side is the rightmost zero of the quadratic real function
$$z \mapsto \beta z^2 + (-2\beta\|Ab\| - \|b\| - \|c\|)z + \beta\|Ab\|^2 - \|Ab\| \|b\|,$$
whose leading coefficient is strictly positive. Thus, we have, letting $z:=\|Aa||$,
$$\beta \|Aa\|^2 + (-2\beta\|Ab\| - \|b\| - \|c\|)\|Aa\| + \beta\|Ab\|^2 - \|Ab\| \|b\| \geq 0,$$
so, using the Cauchy-Schwarz inequality,
\begin{align*}
\beta\|Aa-Ab\|^2 - \|Aa\| \|b\| - \|Ab\| \|b\| &= \beta(\|Aa\|^2 - 2\langle Aa,Ab \rangle + \|Ab\|^2) \\
&\ \ \ \ -(\|b\|+\|c\|)\|Aa\| - \|Ab\| \|b\| + \|c\|\|Aa\| \\
&\geq \beta(\|Aa\|^2 - 2\|Aa\| \|Ab\| + \|Ab\|^2) \\
&\ \ \ \ -(\|b\|+\|c\|)\|Aa\| - \|Ab\| \|b\| + \|c\|\|Aa\| \\
&\geq \|c\|\|Aa\|.
\end{align*}
In addition, since $A$ is $\beta$-cocoercive,
$$\langle Aa-Ab, a-b \rangle \geq \beta \|Aa-Ab\|^2,$$
so
$$\langle Aa-Ab,a \rangle \geq \beta \|Aa-Ab\|^2 + \langle Aa-Ab,b \rangle \geq \beta \|Aa-Ab\|^2 - \|Aa\| \|b\| - \|Ab\| \|b\| \geq \|c\| \|Aa\|.$$
On the other hand,
$$\langle Aa-Ab,c \rangle \leq \|c\|(\|Aa\|+\|Ab\|),$$
so
$$\langle Aa-Ab, a-c \rangle \geq - \|c\|\|Ab\|,$$
i.e.
$$\langle a-c, Ab-Aa \rangle \leq \|c\|\|Ab\| \leq L_2L_3 \leq (L_1+L_2)(L_3+\rho).$$
\end{proof}

We may make use now of the above proposition to obtain a quantitative version of \cite[Theorem 3.3]{BauMouXX}, which uses an analysis of the argument used to prove the Br\'ezis-Haraux theorem in \cite{BreHar76}.

\begin{theorem}\label{thm-phi}
Let $\Theta$ be defined as in Proposition~\ref{prop-theta}. Put, for all $\alpha_1$, $\alpha_2 \in (0,1)$, $\delta > 0$ and $K : (0,\infty) \to (0, \infty)$,
\begin{align*}
B(\alpha_2,K,\delta)&:=\sqrt{\left(K\left(\frac\delta4\right)+\frac\delta8\right)^2 + 2\Theta\left(\alpha_2^{-1}-1,K\left(\frac\delta4\right)+\frac\delta8,K\left(\frac\delta4\right)+\frac\delta8,\frac\delta8\right)}\\
\Phi(\alpha_1,\alpha_2,K,\delta)&:=B(\alpha_2,K,\delta) \cdot \max\left(\sqrt{2}, \frac{4B(\alpha_2,K,\delta)}\delta \right) \cdot \frac1{1-\alpha_1} + \frac{\alpha_1}{1-\alpha_1} \left(K\left(\frac\delta4\right)+\frac\delta8\right) + \frac\delta8.
\end{align*}
Let $X$ be a Hilbert space. Let $\alpha_1$, $\alpha_2 \in (0,1)$ and $R_1$, $R_2:X \to X$ such that for each $i$, $R_i$ is $\alpha_i$-averaged. Put $R:=R_2 \circ R_1$. Let $K: (0,\infty) \to (0,\infty)$ be such that for all $i$ and all $\eps > 0$ there is a $p \in X$ with $\|p\| \leq K(\eps)$ and $\|p-R_ip\| \leq \eps$.

Then for all $\delta > 0$ there is a $p \in X$ with $\|p\| \leq \Phi(\alpha_1,\alpha_2,K,\delta)$ and $\|p-Rp\| \leq \delta$.
\end{theorem}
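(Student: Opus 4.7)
The plan is to reduce the problem to the underlying maximally monotone cocoercive operators $A_i := J_i^{-1} - \id$, which by \cite[Proposition~2.2]{BauMouXX} are $\beta_i$-cocoercive with $\beta_i = \alpha_i^{-1}-1$, and to combine quantitative approximate zeros of $A_1$ and $A_2$ via the rectangularity bound of Proposition~\ref{prop-theta}. First I would convert the hypothesis: setting $u_i := J_ip_i$, the identity $R_i = 2J_i - \id$ gives $\|u_i - p_i\| = \tfrac{1}{2}\|p_i - R_ip_i\| \le \delta/8$, hence $\|u_i\| \le K(\delta/4)+\delta/8 =: M$ and $\|A_iu_i\| = \|p_i - u_i\| \le \delta/8$. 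The key structural identity is that for any candidate $p \in X$, setting $u := J_1p$ and $v := J_2(R_1p)$, a direct computation using $p = u + A_1u$ and $R_1p = v + A_2v$ yields
\[
p - Rp = 2(u-v), \qquad A_1u + A_2v = u - v.
\]
So it suffices to produce $u$ (with $v$ defined as above) such that $\|u - v\| \le \delta/2$ while keeping $\|u + A_1u\| \le \Phi$; then $p := u + A_1u$ is the desired approximate fixed point.

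Next I would apply Proposition~\ref{prop-theta} to $A = A_2$, $b = u_2$ and reference point $c = u_1$ (so $L_1 = L_2 = M$, $L_3 = \delta/8$), which yields for every $a \in X$
\[
\langle a - u_1,\, A_2u_2 - A_2a\rangle \le \Theta(\beta_2, M, M, \delta/8).
\]
Specializing with $a = v$ and combining with $\|v + A_2v\|^2 = \|R_1p\|^2$ (from $v = J_2(R_1p)$), the bounds $\|A_iu_i\| \le \delta/8$, and the splitting identity $A_2v = u - v - A_1u$, one expands inner products to obtain a quadratic inequality in the relevant norms. Its resolution gives $B(\alpha_2, K, \delta) = \sqrt{M^2 + 2\Theta(\beta_2, M, M, \delta/8)}$, the term $M^2 + 2\Theta$ arising exactly as the discriminant of the quadratic that results from pairing the rectangularity inequality with the squared-norm identity for $v + A_2v$.

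Finally, for the construction of $p$ itself, the naive choice $u := u_1$ only guarantees $\|u - v\| = O(\sqrt{\Theta})$, which may exceed $\delta/2$. Hence I would introduce a relaxation parameter $\mu \in (0,1]$ and, exploiting the $\alpha_1$-averaged decomposition $R_1 = (1-\alpha_1)\id + \alpha_1 T_1$ (equivalently, that $A_1$ is $\alpha_1/(1-\alpha_1)$-Lipschitz), write $p$ in the form $p = (\Delta - \alpha_1 u_1)/(1-\alpha_1) + O(\delta/8)$ for an auxiliary point $\Delta$ built from $v$ and scaled by $\mu$. This directly yields
\[
\|p\| \le \frac{\|\Delta\|}{1-\alpha_1} + \frac{\alpha_1}{1-\alpha_1} M + \frac{\delta}{8},
\]
which matches the shape of $\Phi$ once $\|\Delta\| \le B\cdot\max(\sqrt{2}, 4B/\delta)$ is established. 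The $\max$ reflects two regimes of $\mu$: when $B$ is small relative to $\delta$ one takes $\mu = 1/\sqrt{2}$ (the $\sqrt{2}$ factor), while for large $B$ one must take $\mu = \delta/(4B)$ (the $4B/\delta$ factor) in order to bring $\|p - Rp\| = O(\mu B)$ below $\delta$. The main obstacle is precisely this final balancing: $\mu$ has to be chosen to simultaneously force $\|p - Rp\| \le \delta$ via the quadratic rectangularity estimate and $\|p\| \le \Phi$ via the averaged-decomposition identity for $R_1$, with the approximate-zero residuals $A_iu_i$ tracked consistently through the reflected-resolvent identities so that no error term escapes the final bookkeeping.
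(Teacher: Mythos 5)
Your reduction to the cocoercive operators $A_i$ and the structural identities $p-Rp=2(u-v)$, $A_1u+A_2v=u-v$ (with $u=J_1p$, $v=J_2(R_1p)$) are correct and match the paper's setup, as do the conversions $\|u_i\|\le K(\delta/4)+\delta/8$, $\|A_iu_i\|\le\delta/8$. But the proposal never actually produces the point $u$: this is the heart of the matter and it is missing. In the paper, $u$ is obtained by an existence principle --- $A_1+A_2$ is maximally monotone by the sum rule, so by Minty's theorem the strongly monotone perturbation $\eta\,\mathrm{id}_X+A_1+A_2$ is surjective, and one solves \emph{exactly} $\eta u+A_1u+A_2u=f$ with $f:=A_1u_1+A_2u_2$ and $\eta:=\min\bigl(\tfrac12,\eps^2/(M^2+2\Theta)\bigr)$. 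Only with this exact equation in hand do the two inner-product estimates (plain monotonicity of $A_1$ against $A_1u_1$, and Proposition~\ref{prop-theta} applied to $A_2$ at the point $a=u$, not $a=v$) sum to give $\langle u,u-u_1\rangle\le\Theta/\eta$, whence $\eta\|u\|^2\le M^2+2\Theta$, $\|\eta u\|\le\eps$, and $\|A_1u+A_2u\|=\|f-\eta u\|\le\delta/2$; nonexpansiveness of $J_2$ then transfers this to $\|u-v\|\le\delta/2$ for $p:=u+A_1u$. Your sketch instead posits a ``relaxation parameter $\mu$'' and an ``auxiliary point $\Delta$ built from $v$ and scaled by $\mu$'', but $v$ is defined from $p$, $\Delta$ from $v$, and $p$ from $\Delta$ --- a circular prescription with no existence argument (no Minty, no regularized inclusion, no fixed-point principle) that could break the circle and produce a concrete $u$.

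Relatedly, the quantitative shape you describe is reverse-engineered from the statement rather than derived: the ``quadratic inequality whose discriminant gives $M^2+2\Theta$'' and the two regimes $\mu=1/\sqrt2$ versus $\mu=\delta/(4B)$ do correspond to the paper's $\sqrt{\eta}\,\|u\|\le B$ and $\eta=\min(\tfrac12,\eps^2/B^2)$, but in the paper these follow from the exact regularized equation, not from pairing the rectangularity bound at $a=v$ with $\|v+A_2v\|=\|R_1p\|$ (which again presupposes the unknown $p$). Without the Minty step the estimate $\|u-v\|\le\delta/2$ cannot be forced for any explicitly constructed $u$: taking $u=u_1$, or any point built directly from $u_1,u_2$, only gives bounds of order $\sqrt{\Theta/\eta}$-type after the fact, exactly the obstruction you acknowledge but do not overcome. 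So the proposal reproduces the bookkeeping around the correct answer while omitting the one genuinely non-elementary ingredient (the Br\'ezis--Haraux/Minty surjectivity argument) that makes the proof work.
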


\begin{proof}
Let $\delta > 0$. We know that there are two single-valued maximal monotone operators, $A$, $B : X \to X$, with $A$ being $(\alpha_1^{-1}-1)$-cocoercive and $B$ being $(\alpha_2^{-1}-1)$-cocoercive, such that $R_1 = R_A$ and $R_2 = R_B$.

Put $\eps:=\delta/4$. By the hypothesis, there are $p$, $q\in X$ such that $\|p\|$, $\|q\| \leq K(\eps)$ and $\|p-R_Ap\|$, $\|q-R_Aq\| \leq \eps$. Since, by the definition of the reflected resolvent, $p-R_Ap = 2(p-J_Ap)$ and $q-R_Bq=2(q-J_Bq)$, we have that $\|p-J_Ap\|$, $\|q-J_Bq\| \leq \eps/2$. Also, we have, by the definition of the resolvent, that $p-J_Ap=AJ_Ap$ and $q-J_Bq=BJ_Bq$.

Put $f:=p-J_Ap+q-J_Bq$,
$$c:=\Theta\left(\alpha_2^{-1}-1, K(\eps)+\frac\eps2, K(\eps)+\frac\eps2, \frac\eps2\right),$$
and
$$\eta:=\min\left(\frac12,\frac{\eps^2}{\left(K(\eps)+\frac\eps2\right)^2 + 2c} \right),$$
so $\eta \in (0,1)$ and
$$\sqrt{\eta}\cdot \sqrt{\left(K(\eps)+\frac\eps2\right)^2 + 2c} \leq\eps.$$

By the sum rule, $A+B$ is maximally monotone. Then, by Minty's theorem, there is an $u \in X$ such that $f=\eta u + Au + Bu$. Since $A$ is monotone,
$\langle Au-(p-J_Ap), u-J_Ap \rangle \geq 0$. Since $B$ is $(\alpha_2^{-1}-1)$-cocoercive, and we know that
$$\|J_Bq\| \leq \|q\|+\|J_Bq-q\| \leq K(\eps) + \frac\eps2,$$
that similarly,
$$\|J_Ap\| \leq K(\eps) + \frac\eps2,$$
and that
$$\|BJ_Bq\| = \|q-J_Bq\| \leq \frac\eps2,$$
we may apply Proposition~\ref{prop-theta} to get that
$$\langle u - J_Ap, BJ_Bq - Bu \rangle \leq c,$$
so
$$\langle Bu - (q-J_Bq), u - J_Ap \rangle \geq -c.$$
Summing up, we get that
$$\langle f-\eta u - f, u - J_Ap \rangle \geq -c,$$
so
$$\langle u,u-J_Ap \rangle \leq c/\eta.$$
On the other hand, we have that
$$\|J_Ap\|^2 = \|u-(u-J_Ap)\|^2 = \|u\|^2 - 2\langle u,u-J_Ap \rangle + \|u-J_Ap\|^2 \geq \|u\|^2 - 2c/\eta + 0,$$
so
$$\|u\|^2 \leq \|J_Ap\|^2 + 2c/\eta$$
and
$$\eta \|u\|^2 \leq \eta \|J_Ap\|^2 + 2c \leq \|J_Ap\|^2 + 2c.$$
Therefore
$$\|\eta u\| = \sqrt{\eta} \cdot \sqrt{\eta} \cdot \|u\| \leq \sqrt{\eta} \cdot \sqrt{\|J_Ap\|^2 + 2c} \leq  \sqrt{\eta} \cdot \sqrt{\left(K(\eps) + \frac\eps2\right)^2 + 2c}  \leq \eps,$$
$$\|f-\eta u\| \leq \|f\| + \|\eta u\| \leq \frac\eps2 + \frac\eps2 + \eps = 2\eps.$$

We have that
$$2J_A - id_X = R_A = id_XR_A = (2J_B - R_B)R_A = 2J_BR_A - R_BR_A,$$
so
$$2J_A - 2J_BR_A = id_X -R_BR_A.$$
Now set
$$z:=(2J_A - 2J_BR_A)(u+Au) = (id_X -R_BR_A)(u+Au).$$
We have (using for the first equality the definition of the resolvent, and for the second the {\it inverse resolvent identity}, \cite[p. 399, (23.17)]{BauCom17}) that
$$Bu = J_{B^{-1}}(u+Bu) = u+Bu - J_B(u+Bu)$$
and, since by the definition of the resolvent, $J_A(u+Au)=u=J_B(u+Bu)$ and, by the definition of the reflected resolvent, $R_A(u+Au)=u-Au$,
$$z=2u-2J_B(u-Au)=2J_B(u+Bu) - 2J_B(u-Au).$$

We may now bound:
$$\|z\| = 2\|J_B(u+Bu) - J_B(u-Au)\| \leq 2\|u+Bu-u+Au\| = 2\|Au+Bu\| = 2\|f-\eta u\| \leq 4\eps = \delta.$$
We may then set $p:=u+Au$, since, as $z=p-Rp$, we have that $\|p-Rp\| \leq \delta$. We now only have to bound $p$.

Since we have seen that
$$\sqrt{\eta} \cdot \|u\| \leq B(\alpha_2,K,\delta)$$
we have, by the definition of $\eta$, that
$$\|u\| \leq \max\left(B(\alpha_2,K,\delta) \cdot\sqrt{2}, \frac{B^2(\alpha_2,K,\delta)}\eps\right).$$
Since $A$ is $(\alpha_1^{-1}-1)$-cocoercive, it is $\alpha_1/(1-\alpha_1)$-Lipschitz, so
$$\|Au\| \leq \|Au - AJ_Ap\| + \|AJ_Ap|| \leq \frac{\alpha_1}{1-\alpha_1}(\|u\| + \|J_Ap\|) + \|p-J_Ap\|.$$
By putting all bounds obtained so far together, we get that
$$\|p\| \leq \|u\| + \|Au\| \leq \Phi(\alpha_1,\alpha_2,K,\delta).$$
\end{proof}

The following is a quantitative version of \cite[Proposition 3.4]{BauMouXX}.

\begin{theorem}\label{thm-psi}
Let $\Phi$ be defined as in Theorem~\ref{thm-phi}. Define, for all $m \geq 2$, $\delta > 0$, $K : (0,\infty) \to (0, \infty)$ and suitable finite sequences $\{\alpha_i\}_i \subseteq (0,1)$,
\begin{align*}
\Psi(2,\{\alpha_i\}_{i=1}^2,K,\delta)&:=\Phi(\alpha_1,\alpha_2,K,\delta)\\
\Psi(m+1,\{\alpha_i\}_{i=1}^{m+1},K,\delta)&:=\Phi(\alpha_1\star\ldots\star\alpha_m,\alpha_{m+1},\rho\mapsto\max(\Psi(m,\{\alpha_i\}_{i=1}^m,K,\rho),K(\rho)) ,\delta)
\end{align*}
Let $X$ be a Hilbert space. Let $m \geq 2$, $\alpha_1,\ldots,\alpha_m \in (0,1)$ and $R_1,\ldots,R_m:X \to X$ such that for each $i$, $R_i$ is $\alpha_i$-averaged. Put $R:=R_m \circ\ldots\circ R_1$. Let $K: (0,\infty) \to (0,\infty)$ be such that for all $i$ and all $\eps > 0$ there is a $p \in X$ with $\|p\| \leq K(\eps)$ and $\|p-R_ip\| \leq \eps$.

Then for all $\delta > 0$ there is a $p \in X$ with $\|p\| \leq \Psi(m,\{\alpha_i\}_{i=1}^m,K,\delta)$ and $\|p-Rp\| \leq \delta$.
\end{theorem}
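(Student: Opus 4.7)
The plan is to proceed by induction on $m$, with the base case $m=2$ being precisely Theorem~\ref{thm-phi}, as encoded by the first line of the definition of $\Psi$. So the only content is the inductive step from $m$ to $m+1$.

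For the step, I would write $R_{m+1} \circ R_m \circ \cdots \circ R_1 = R_{m+1} \circ S$, where $S := R_m \circ \cdots \circ R_1$. By the cited \cite[Proposition 4.46]{BauCom17}, $S$ is $(\alpha_1 \star \cdots \star \alpha_m)$-averaged, so we are in the two-mapping situation of Theorem~\ref{thm-phi} applied to $S$ and $R_{m+1}$, with parameters $\alpha_1 \star \cdots \star \alpha_m$ and $\alpha_{m+1}$. To invoke that theorem I need a single function bounding approximate fixed points of \emph{both} mappings: for $R_{m+1}$ the hypothesis already gives $K$, while for $S$ the inductive hypothesis provides $\rho \mapsto \Psi(m,\{\alpha_i\}_{i=1}^m,K,\rho)$. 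Setting $K'(\rho) := \max(\Psi(m,\{\alpha_i\}_{i=1}^m,K,\rho),K(\rho))$ therefore works for both, and plugging this into Theorem~\ref{thm-phi} yields a $p \in X$ with $\|p-Rp\|\leq\delta$ and $\|p\|\leq\Phi(\alpha_1\star\cdots\star\alpha_m,\alpha_{m+1},K',\delta)$, which is exactly the definition of $\Psi(m+1,\{\alpha_i\}_{i=1}^{m+1},K,\delta)$.

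There is essentially no analytic obstacle once Theorem~\ref{thm-phi} is granted; the work is purely bookkeeping. The one point that deserves attention is that, in order to feed $K'$ into Theorem~\ref{thm-phi}, it must be a legitimate function $(0,\infty) \to (0,\infty)$ of the same type as the $K$ in the hypothesis of that theorem, which is why the definition takes the pointwise maximum rather than, say, trying to compare the two bounds abstractly. I would also note explicitly that $\Phi$ is built from $K$ only through evaluations at arguments of the form $\delta/4$ and additive combinations, so the substitution $K \mapsto K'$ is syntactically unproblematic. Thus the induction goes through, establishing the theorem.
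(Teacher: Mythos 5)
Your proposal is correct and follows exactly the paper's argument: induction on $m$, with Theorem~\ref{thm-phi} handling both the base case and the step, the composition $R_m\circ\cdots\circ R_1$ being $(\alpha_1\star\cdots\star\alpha_m)$-averaged, and the pointwise maximum of the inductive bound and $K$ serving as the common approximate-fixed-point bound. Nothing to add.
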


\begin{proof}
It follows by simple induction on $m$, using Theorem~\ref{thm-phi} for both the base step and the induction step and the fact that for each $l$, $R_l \circ \ldots\circ  R_1$ is $(\alpha_1\star\ldots\star\alpha_l)$-averaged.
\end{proof}

We may now return to the question of finding a rate of asymptotic regularity. Towards that end, we bring forward and expand upon quantitative results in \cite{Koh16,Koh19b} on strong nonexpansivity.

\begin{definition}
Let $X$ be a Hilbert space, $T:X\to X$ and $\omega:(0,\infty) \times (0,\infty) \to (0,\infty)$. Then $T$ is called {\bf strongly nonexpansive} with modulus $\omega$ if for any $b$, $\eps > 0$ and $x$, $y \in X$ with $\|x-y\| \leq b$ and $\|x-y\| - \|Tx-Ty\| < \omega(b,\eps)$, we have that $\|(x-y) - (Tx-Ty)\| < \eps$.
\end{definition}

\begin{theorem}[{cf. \cite[Theorem 1]{Koh19b}}]\label{vp}
Define, for any $\eps$, $b$, $d > 0$, $\alpha : (0, \infty) \to (0, \infty)$ and $\omega:(0,\infty) \times (0,\infty) \to (0,\infty)$,
$$\vp(\eps,b,d,\alpha,\omega):=\left\lceil \frac{18b + 12\alpha(\eps/6)}\eps-1\right\rceil \cdot \left\lceil \frac{d}{\omega\left(d,\frac{\eps^2}{27b + 18\alpha(\eps/6)}\right)} \right\rceil.$$
Let $X$ be a Hilbert space, $T:X\to X$ and $\omega:(0,\infty) \times (0,\infty) \to (0,\infty)$ such that $T$ is strongly nonexpansive with modulus $\omega$. Let $\alpha : (0, \infty) \to (0, \infty)$ such that for any $\delta > 0$ there is a $p \in X$ with $\|p\|\leq\alpha(\delta)$ and $\|p-Tp\| \leq \delta$. Then for any $\eps$, $b$, $d>0$ and any $x \in X$ with $\|x\| \leq b$ and $\|x-Tx\|\leq d$, we have that for any $n \geq \vp(\eps,b,d,\alpha,\omega)$, $\|T^nx-T^{n+1}x\|\leq \eps$.
\end{theorem}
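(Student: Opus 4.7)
The plan is to follow the proof-mining recipe established in \cite{Koh16,Koh19b} for turning a rate of approximate fixed points together with a modulus of strong nonexpansiveness into a rate of asymptotic regularity. A crucial simplification is that, because $T$ is nonexpansive, the sequence $a_n := \|T^n x - T^{n+1} x\|$ is non-increasing with $a_0 \le d$; it therefore suffices to exhibit a single index $n \le \vp(\eps, b, d, \alpha, \omega)$ with $a_n \le \eps$, and non-increasingness then propagates the estimate to all later indices.

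The engine of the proof is a pigeonhole argument based on an approximate fixed point. One picks $p$ with $\|p\| \le \alpha(\eps/6)$ and $\|p - Tp\| \le \eps/6$, and considers the ``gap'' $g_n := \|T^n x - p\| - \|T^{n+1} x - Tp\|$. This gap is non-negative by nonexpansiveness of $T$, and a short telescoping computation combined with the triangle inequality gives $\sum_{n<N} g_n \le (b + \alpha(\eps/6)) + N\cdot\eps/6$. Pigeonhole then produces, for $N$ large enough, some $n<N$ whose gap $g_n$ is below the threshold at which strong nonexpansiveness triggers. The conclusion of strong nonexpansiveness is $\|(T^n x - T^{n+1} x) - (p - Tp)\| < \eta$, which forces $a_n < \eta + \eps/6$; choosing $\eta$ so that $\eta + \eps/6 \le \eps$ finishes the argument at that index.

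The delicate step, and the main obstacle, is to calibrate the choices of $\eta$, of the bound on $\|T^n x - p\|$ appearing as the first argument of $\omega$, and of the length $N$ of the range so that the resulting count matches the precise expression for $\vp$. Since in the rate $\omega$ is evaluated at first argument $d$---rather than at the naive bound $b + \alpha(\eps/6) + n\cdot\eps/6$ on $\|T^n x - p\|$, which grows linearly in $n$---and at the quadratically small second argument $\eps^2/(27b + 18\alpha(\eps/6))$, strong nonexpansiveness must in fact also be invoked at the pair $(T^n x, T^{n+1} x)$, whose displacement is bounded by $d$. The clean product structure $F_1\cdot F_2$ of $\vp$ then corresponds to a nested two-level pigeonhole: an inner phase of length $F_2 = \lceil d/\omega(d, \eps^2/(27b + 18\alpha(\eps/6)))\rceil$ that controls consecutive displacements $\|y_n - y_{n+1}\|$ (with $y_n := T^n x - T^{n+1} x$) via the modulus, and an outer phase of length $F_1 = \lceil(18b + 12\alpha(\eps/6))/\eps - 1\rceil$ that amortises the drift $\|T^n p - p\| \le n\cdot\eps/6$ arising from $p$ being only an approximate fixed point. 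Packaging these two pigeonholes into the product form of $\vp$ is the quantitative heart of the argument, and I would follow the bookkeeping of the proof of \cite[Theorem 2.5]{Koh19b} closely to align the constants.
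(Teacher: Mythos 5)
The paper gives no proof of Theorem~\ref{vp}: it is imported (in adapted form) from \cite[Theorem 2.5]{Koh19b}, so your proposal has to stand on its own as a proof, and it does not. The first ``engine'' you describe cannot work. The telescoping bound $\sum_{n<N} g_n \le (b+\alpha(\eps/6)) + N\cdot\eps/6$ is correct, but the pigeonhole then only yields an index with $g_n \le (b+\alpha(\eps/6))/N + \eps/6$, and the additive $\eps/6$ -- present precisely because $p$ is only an $\eps/6$-approximate fixed point -- does not shrink as $N$ grows. Since the modulus may be far smaller than $\eps/6$ (for the averaged maps of this paper, $\omega_\alpha(b,\eps)=\frac{\alpha(1-\alpha)}{4b}\eps^2$), you can never force $g_n$ below the threshold at which strong nonexpansiveness ``triggers''; and because $\vp$ only calls $\alpha$ at $\eps/6$, you cannot repair this by taking a better approximate fixed point without changing the very rate you are asked to verify.

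Your fallback, the ``nested two-level pigeonhole'', does correctly read off from the shape of $\vp$ that the modulus must be applied to the pairs $(T^n x, T^{n+1}x)$, whose distance is bounded by $d$; the inner count $\lceil d/\omega(d,\eta_0)\rceil$, with $\eta_0:=\eps^2/(27b+18\alpha(\eps/6))$, indeed produces in each block an index $n$ with $a_n - a_{n+1} < \omega(d,\eta_0)$ and hence $\|y_n - y_{n+1}\| < \eta_0$. But the proposal stops exactly where the theorem begins: you never show how these finitely many near-coincidences $y_n \approx y_{n+1}$, together with the approximate fixed point $p$, force $a_n \le \eps$, nor where the constants $18$, $12$, $27$, $18$ and the quadratic second argument of $\omega$ come from; the phrase about the outer phase ``amortising the drift $\|T^np - p\| \le n\cdot\eps/6$'' names a difficulty rather than resolving it. Declaring that you ``would follow the bookkeeping of the proof of \cite[Theorem 2.5]{Koh19b}'' defers precisely the quantitative content that the statement asserts -- the explicit rate is the theorem -- so as written this is a plan to consult the cited proof, not a proof.
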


The following lemma is the instantiation of \cite[Lemma 2.15]{Koh16} for Hilbert spaces, using their modulus of uniform convexity $\eps \mapsto \eps^2/8$.

\begin{lemma}\label{l-uc}
Let $X$ be a Hilbert space. Then for any $\eps \in (0,2]$, $d > 0$, $\alpha \in (0,1)$ and $x$, $y \in X$, with $\|x\|$, $\|y\| \leq d$, if
$$\|(1-\alpha)x+\alpha y\| > \left(1-2\alpha(1-\alpha)\frac{\eps^2}8\right) \cdot d,$$
then
$$\|x-y\| < \eps \cdot d.$$
\end{lemma}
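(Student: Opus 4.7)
The plan is to prove the contrapositive using the Hilbert space parallelogram-type identity
\[
\|(1-\alpha)x + \alpha y\|^2 = (1-\alpha)\|x\|^2 + \alpha\|y\|^2 - \alpha(1-\alpha)\|x-y\|^2,
\]
which is the sharp expression of uniform convexity of the squared norm in the Hilbert setting. This identity reduces the whole lemma to an elementary scalar inequality, so there is essentially no conceptual obstacle; the only thing to verify is that the constant $2\alpha(1-\alpha)\eps^2/8 = \alpha(1-\alpha)\eps^2/4$ on the statement's right-hand side is compatible with the square root one picks up from the identity.

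Concretely, I would first assume, for contraposition, that $\|x-y\| \geq \eps \cdot d$. Plugging this into the identity and using $\|x\|,\|y\| \leq d$ gives
\[
\|(1-\alpha)x + \alpha y\|^2 \leq d^2 - \alpha(1-\alpha)\|x-y\|^2 \leq d^2\bigl(1 - \alpha(1-\alpha)\eps^2\bigr).
\]
Setting $t := \alpha(1-\alpha)\eps^2$, note that $\alpha(1-\alpha)\leq 1/4$ and $\eps^2\leq 4$ together yield $t \in [0,1]$, so both sides of the desired scalar bound $\sqrt{1-t}\leq 1 - t/4$ are nonnegative.

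For that scalar bound I would simply square: $\sqrt{1-t}\leq 1 - t/4$ is equivalent (on $t\in[0,1]$, where $1-t/4\geq 3/4>0$) to $1 - t \leq 1 - t/2 + t^2/16$, i.e.\ $t(t+8)\geq 0$, which is trivial. Applying this with $t = \alpha(1-\alpha)\eps^2$ turns the previous display into
\[
\|(1-\alpha)x + \alpha y\| \leq d\cdot\sqrt{1-t} \leq d\left(1 - \frac{\alpha(1-\alpha)\eps^2}{4}\right) = \left(1 - 2\alpha(1-\alpha)\frac{\eps^2}{8}\right) d,
\]
which is the negation of the hypothesis on the convex combination, completing the contrapositive. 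The whole argument is a couple of lines once the identity and the elementary $\sqrt{1-t}\leq 1 - t/4$ inequality are in place, and there is no genuinely hard step.
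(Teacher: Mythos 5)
Your proof is correct, and it is worth noting that the paper does not actually prove this lemma in-text: it simply cites it as the instantiation of a general uniform-convexity lemma of Kohlenbach (\cite{Koh16}, Lemma 2.15) at the standard Hilbert modulus $\eps \mapsto \eps^2/8$, the general lemma being formulated for uniformly convex normed spaces with an arbitrary modulus. Your route replaces that citation by a short self-contained computation built on the exact Hilbert identity $\|(1-\alpha)x+\alpha y\|^2=(1-\alpha)\|x\|^2+\alpha\|y\|^2-\alpha(1-\alpha)\|x-y\|^2$, arguing by contraposition; all the details check out: the identity expands correctly, $t=\alpha(1-\alpha)\eps^2\in(0,1]$ follows from $\alpha(1-\alpha)\le 1/4$ and $\eps\le 2$, and the squaring step for $\sqrt{1-t}\le 1-t/4$ is legitimate since $1-t/4\ge 3/4>0$, giving exactly the negated hypothesis $\|(1-\alpha)x+\alpha y\|\le\bigl(1-2\alpha(1-\alpha)\eps^2/8\bigr)d$. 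What the paper's route buys is generality and modularity (the same lemma works in any uniformly convex space once a modulus is supplied, which matches the proof-mining framework it comes from); what your route buys is a two-line argument with no external dependency, and in fact it shows the Hilbert constant is not sharp in this formulation, since the same computation yields $\sqrt{1-t}\le 1-t/2$, i.e.\ the conclusion would hold with $\eps^2/4$ in place of $\eps^2/8$ -- though keeping $\eps^2/8$ is of course what is needed to match the stated modulus $\omega_\alpha$ used later in Proposition~\ref{av-sne}.
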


\begin{proposition}\label{av-sne}
Define, for any $\alpha \in (0,1)$, $b$, $\eps > 0$,
$$\omega_\alpha(b,\eps):=\frac{1-\alpha}{4b\alpha} \cdot \eps^2.$$
Let $X$ be a Hilbert space, $\alpha \in(0,1)$ and $R: X\to X$ an $\alpha$-averaged mapping. Then $R$ is strongly nonexpansive with modulus $\omega_\alpha$.
\end{proposition}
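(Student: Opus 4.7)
The plan is to unpack the definition of $\alpha$-averagedness and then reduce the statement directly to the uniform convexity lemma \ref{l-uc}. Writing $R = (1-\alpha)\id + \alpha T$ for some nonexpansive $T$, and setting $u := x-y$ and $v := Tx - Ty$, we have $\|v\| \leq \|u\| \leq b$ (by nonexpansiveness of $T$) together with the identity $(x-y) - (Rx-Ry) = \alpha(u - v)$. Consequently the desired conclusion $\|(x-y)-(Rx-Ry)\| < \eps$ is equivalent to $\|u-v\| < \eps/\alpha$, and the hypothesis $\|x-y\| - \|Rx-Ry\| < \omega_\alpha(b,\eps)$ rewrites as $\|u\| - \|(1-\alpha)u + \alpha v\| < \frac{\alpha(1-\alpha)}{4b}\eps^2$.

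Next, I would dispose of two trivial cases. If $u = 0$, both sides of the conclusion vanish. If $\eps > 2\alpha\|u\|$, then $\|(x-y)-(Rx-Ry)\| = \alpha\|u-v\| \le \alpha(\|u\|+\|v\|) \leq 2\alpha\|u\| < \eps$, with no use of the hypothesis.

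For the main case $0 < \|u\|$ and $\eps \leq 2\alpha\|u\|$, I apply Lemma~\ref{l-uc} with the bound $d := \|u\|$ (both $u$ and $v$ having norm at most $d$) and target accuracy $\eps' := \eps/(\alpha\|u\|) \in (0,2]$. The premise that lemma requires is $\|(1-\alpha)u+\alpha v\| > \bigl(1 - \tfrac{\alpha(1-\alpha){\eps'}^2}{4}\bigr)\|u\| = \|u\| - \tfrac{(1-\alpha)\eps^2}{4\alpha\|u\|}$, whereas the hypothesis on $R$ gives $\|(1-\alpha)u+\alpha v\| > \|u\| - \tfrac{\alpha(1-\alpha)\eps^2}{4b}$. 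So it is enough to verify the numerical inequality $\tfrac{\alpha(1-\alpha)\eps^2}{4b} \leq \tfrac{(1-\alpha)\eps^2}{4\alpha\|u\|}$, which reduces to $\alpha^2\|u\| \leq b$, and this is clear since $\alpha \leq 1$ and $\|u\| \leq b$. The lemma then delivers $\|u-v\| < \eps'\|u\| = \eps/\alpha$, whence $\|(x-y)-(Rx-Ry)\| = \alpha\|u-v\| < \eps$, as required.

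The only genuinely delicate point is the mismatch between two normalizations: the modulus $\omega_\alpha$ is stated in terms of the global upper bound $b$, while Lemma~\ref{l-uc} is most naturally applied with $d$ equal to the actual displacement $\|u\|$. Reconciling them is what forces the factor $4b$ (rather than $4\|u\|$) in the definition of $\omega_\alpha$, and the compatibility boils down precisely to $\alpha^2\|u\| \leq b$. Everything else is bookkeeping around the parallelogram identity already encapsulated in Lemma~\ref{l-uc}.
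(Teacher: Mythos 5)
Your proof is correct and follows essentially the same route as the paper: decompose $R=(1-\alpha)\id+\alpha T$, split off the trivial small-displacement case, and apply Lemma~\ref{l-uc} with $d=\|x-y\|$, the only (cosmetic) difference being that you absorb the factor $\alpha$ from $(x-y)-(Rx-Ry)=\alpha\bigl((x-y)-(Tx-Ty)\bigr)$ at the outset by taking accuracy $\eps/(\alpha\|x-y\|)$, whereas the paper uses $\eps/\|x-y\|$ and multiplies by $\alpha$ at the end. Both verifications of the lemma's premise hinge on the same comparison $\|x-y\|\leq b$, so the argument goes through exactly as you wrote it.
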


\begin{proof}
Let $T:X \to X$ be nonexpansive such that $R=(1-\alpha)id_X + \alpha T$. Let $b$, $\eps > 0$ and $x$, $y \in X$ with $\|x-y\| \leq b$ and $\|x-y\| - \|Rx-Ry\| < \frac{1-\alpha}{4b\alpha} \cdot \eps^2$. We have to show that $\|(x-y) - (Rx-Ry)\| < \eps$.

If $\|x-y\| < \eps/2$, then $\|Rx-Ry\| < \eps/2$, so clearly $\|(x-y) - (Rx-Ry)\| < \eps$. Assume now that $\|x-y\| \geq \eps/2$, so $\eps/\|x-y\| \leq 2$.
We have that
$$\|Rx-Ry\| = \|(1-\alpha)(x-y) + \alpha(Tx-Ty) \|$$
and on the other hand
$$\|x-y\| - \|Rx-Ry\| < \frac{1-\alpha}{4b\alpha} \cdot \eps^2 \leq \frac{1-\alpha}{4\|x-y\|\alpha} \cdot \eps^2 = \frac{2(1-\alpha)}\alpha \cdot \frac{\eps^2}{8\|x-y\|^2} \cdot \|x-y\|,$$
so
$$\|(1-\alpha)(x-y) + \alpha(Tx-Ty) \| > \|x-y\| - 2\alpha(1-\alpha) \cdot \frac{(\eps/\alpha)^2}{8\|x-y\|^2} \cdot \|x-y\|.$$
Applying Lemma~\ref{l-uc} for $\eps \mapsto \eps/(\alpha\|x-y\|)$, $d \mapsto \|x-y\|$, $x \mapsto x-y$ and $y \mapsto Tx-Ty$, we get that
$$\|(x-y) - (Tx-Ty)\| < \frac\eps{\alpha\|x-y\|} \cdot \|x-y\| = \frac\eps\alpha,$$
so
$$\|(x-y) - (Rx-Ry)\| = \alpha\|(x-y) - (Tx-Ty)\| < \eps.$$
\end{proof}

Putting together the above results, we obtain the following.

\begin{theorem}\label{thm-main}
Let $\Psi$ be defined as in Theorem~\ref{thm-psi}, $\vp$ as in Theorem~\ref{vp} and $\omega_\bullet$ as in Proposition~\ref{av-sne}. Define, for all $m \geq 2$, $\eps$, $b$, $d > 0$, $K : (0,\infty) \to (0, \infty)$ and $\{\alpha_i\}_{i=1}^m \subseteq (0,1)$,
$$\Sigma_{m,\{\alpha_i\}_{i=1}^m,K,b,d}(\eps):=\vp(\eps,b,d,\delta\mapsto \Psi(m,\{\alpha_i\}_{i=1}^m,K,\delta),\omega_{\alpha_1\star\ldots\star\alpha_m}).$$
Let $X$ be a Hilbert space. Let $m \geq 2$, $\alpha_1,\ldots,\alpha_m \in (0,1)$ and $R_1,\ldots,R_m:X \to X$ such that for each $i$, $R_i$ is $\alpha_i$-averaged. Put $R:=R_m \circ\ldots\circ R_1$. Let $K: (0,\infty) \to (0,\infty)$ be such that for all $i$ and all $\eps > 0$ there is a $p \in X$ with $\|p\| \leq K(\eps)$ and $\|p-R_ip\| \leq \eps$.

Then for any $b$, $d>0$ and any $x \in X$ with $\|x\| \leq b$ and $\|x-Rx\|\leq d$, we have that $\Sigma_{m,\{\alpha_i\}_{i=1}^m,K,b,d}$ is a rate of asymptotic regularity for the sequence $(R^nx)$ w.r.t. $R$, i.e. for any $\eps>0$ and $n \geq \Sigma_{m,\{\alpha_i\}_{i=1}^m,K,b,d}(\eps)$,
$$\|R^nx-R^{n+1}x\|\leq \eps.$$
\end{theorem}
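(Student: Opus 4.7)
The plan is simply to assemble the three previously established results: Theorem~\ref{thm-psi}, Proposition~\ref{av-sne}, and Theorem~\ref{vp}. Essentially no new argument is required beyond verifying that the outputs of each result feed correctly into the hypotheses of the next.

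First, I would observe that since each $R_i$ is $\alpha_i$-averaged, the composition $R = R_m \circ \ldots \circ R_1$ is $(\alpha_1 \star \ldots \star \alpha_m)$-averaged by \cite[Proposition 4.46]{BauCom17} (as recalled in the preliminaries). Applying Proposition~\ref{av-sne} to $R$, we conclude that $R$ is strongly nonexpansive with modulus $\omega_{\alpha_1 \star \ldots \star \alpha_m}$.

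Second, I would use Theorem~\ref{thm-psi}, which guarantees that for every $\delta > 0$ there exists $p \in X$ with $\|p\| \leq \Psi(m, \{\alpha_i\}_{i=1}^m, K, \delta)$ and $\|p - Rp\| \leq \delta$. Thus the function $\delta \mapsto \Psi(m, \{\alpha_i\}_{i=1}^m, K, \delta)$ plays the role of the function $\alpha$ required by Theorem~\ref{vp}.

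Finally, I would invoke Theorem~\ref{vp} with $T := R$, with $\alpha$ taken to be the map $\delta \mapsto \Psi(m, \{\alpha_i\}_{i=1}^m, K, \delta)$, and with $\omega := \omega_{\alpha_1 \star \ldots \star \alpha_m}$: for any $x \in X$ with $\|x\| \leq b$ and $\|x - Rx\| \leq d$, and any $n \geq \vp(\eps, b, d, \delta \mapsto \Psi(m, \{\alpha_i\}_{i=1}^m, K, \delta), \omega_{\alpha_1 \star \ldots \star \alpha_m})$, one obtains $\|R^n x - R^{n+1} x\| \leq \eps$. The bound on $n$ matches $\Sigma_{m, \{\alpha_i\}_{i=1}^m, K, b, d}(\eps)$ by construction, closing the argument. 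There is no real obstacle: all substantive work was done in the earlier results, and this final theorem is a clean assembly of them, the only verification being that the $\star$-composition rule correctly identifies the averaging parameter of $R$.
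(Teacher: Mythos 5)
Your proposal is correct and is precisely how the paper obtains this theorem: the result is stated as a direct assembly of Proposition~\ref{av-sne} (applied to the $(\alpha_1\star\ldots\star\alpha_m)$-averaged composition $R$), Theorem~\ref{thm-psi} (supplying the approximate fixed point bound $\delta\mapsto\Psi(m,\{\alpha_i\}_{i=1}^m,K,\delta)$), and Theorem~\ref{vp}. Nothing further is needed.
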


\section{Acknowledgements}

I would like to thank Ulrich Kohlenbach for pointing me to the paper \cite{BauMouXX} and for suggesting an improvement of Proposition~\ref{av-sne}.

This work has been supported by the German Science Foundation (DFG Project KO 1737/6-1) and by a grant of the Romanian Ministry of Research, Innovation and Digitization, CNCS/CCCDI -- UEFISCDI, project number PN-III-P1-1.1-PD-2019-0396, within PNCDI III.

\end{document}